\newtheorem{theorem}{Theorem}[section]
\newtheorem{lemma}[theorem]{Lemma}
\newtheorem{proposition}[theorem]{Proposition}
\newtheorem{corollary}[theorem]{Corollary}
\newtheorem{definition}[theorem]{Definition}
\newtheorem{example}[theorem]{Example}
\newtheorem{remark}[theorem]{Remark}
\numberwithin{equation}{section}
\DeclareMathOperator{\rad}{rad}
\DeclareMathOperator{\op}{op}
\DeclareMathOperator{\image}{Im}
\DeclareMathOperator{\Hom}{Hom}
\DeclareMathOperator{\End}{End}
\DeclareMathOperator{\Ext}{Ext}
\DeclareMathOperator{\module}{-mod}
\DeclareMathOperator{\additive}{add}
\DeclareMathOperator{\Thick}{Thick}
\newcommand{\D}{{D^b(A)}}
\newcommand{\Drb}{{D^-(A)}}
\newcommand{\K}{{K^b(_A \mathcal{P})}}
\newcommand{\Kb}{{K^b(_B \mathcal{P})}}
\newcommand{\Kr}{{K^b(_R \mathcal{P})}}
\newcommand{\Ks}{{K^b(_S \mathcal{P})}}
\newcommand{\Krb}{{K^-(_A \mathcal{P})}}
\newcommand{\PP}{{P^{\bullet}}}
\newcommand{\QQ}{{Q^{\bullet}}}
\newcommand{\T}{{\mathcal{T}}}
\newcommand{\F}{{\mathcal{F}}}
\title[Compact exceptional objects]{On compact exceptional objects in derived module categories}
\author{Liping Li}
\address{College of Mathematics and Computer Science; Performance Computing and Stochastic Information Processing (Ministry of Education), Hunan Normal University, Changsha, Hunan 410081, China.}
\email{lipingli@hunnu.edu.cn.}
\thanks{The author is supported by the National Natural Science Foundation of China 11541002, the Construct Program of the Key Discipline in Hunan Province, and the Start-Up Funds of Hunan Normal University 830122-0037. He thanks the referee for carefully checking the manuscript and providing many detailed suggestions, which make the current form much better.}
\begin{document}

\begin{abstract}
Let $A$ be a basic and connected finite dimensional algebra and $\D$ be the bounded derived category of finitely generated left $A$-modules. In this paper we consider lengths of tilting objects and indecomposable compact exceptional objects in $\D$, and prove a sufficient condition such that these lengths are bounded by the number of isomorphism classes of simple $A$-modules. Moreover, we show that algebras satisfying this criterion are bounded derived simple, and describe an algorithm to construct a family of algebras satisfying this condition.
\end{abstract}

\maketitle

\section{Introduction}

In algebraic representation theory, one of the most interesting and complicated problems is to classify algebras up to derived equivalence. Explicitly, given a finite dimensional (connected and basic) $k$-algebra $A$, we want to characterize or construct all (connected and basic) algebras whose bounded derived module categories are triangulated equivalent to the bounded derived category $\D$. This big project draws the attention of many people, and quite a few results have been obtained in two different approaches. In one direction, people have classified certain types of algebras with special properties; see \cite{AH1,AH,A,BTA,BTA1,B,Holm1,Keller}. In the other direction, several properties have been shown to be invariant under derived equivalence, such as number of isomorphism classes of simple modules and finiteness of global dimensions (\cite{Happel}), finiteness of finitistic dimensions (\cite{PX}), finiteness of strong global dimensions (\cite{HZ}), self-injective property (\cite{AR}), etc. However, there are much more questions unsolved. For instance, a finite dimensional local algebra is only derived equivalent to algebras Morita equivalent to itself (see \cite{RZ}), and we will show that path algebras of Kronecker quivers have this property as well (see Example 5.1). But a complete list of basic algebras which are only derived equivalent to algebras Morita equivalent to themselves is not available yet.

According to a fundamental result of Rickard (\cite{Rickard1, Rickard2}), an algebra $\Gamma$ is derived equivalent to $A$ if and only if there is a tilting object $T \in \D$ such that $\Gamma$ is isomorphic to the opposite algebra of $\End _{\D} (T)$. Therefore, tilting objects, and more generally, compact exceptional objects are of particular importance, and hence are extensively studied. For instance, Angeleri H\"{u}gel, Koenig, and Liu (in \cite{AKL1,AKL2,AKL3}) use them to investigate recollements and stratifications of derived categories; and Al-Nofayee and Rickard point out in \cite{AR,Rickard3} that for a fixed algebra, there are at most countably many \emph{basic} tilting objects $T$ up to isomorphism and degree shift, where $T$ is basic if its direct summands are pairwise nonisomorphic.

In this paper we mainly focus on \emph{lengths} of objects in derived categories, which are defined as follows. For an arbitrary $\PP \in \Krb$, the homotopy category of right bounded complexes of finitely generated projective $A$-modules, let
\begin{equation*}
a(\PP) = \sup \{ i \in \mathbb{Z} \mid P^i \neq 0 \}- \inf \{ i \in \mathbb{Z} \mid P^i \neq 0 \},
\end{equation*}
called the \textit{amplitude} of $\PP$ (\cite{AF}). Since $\Krb$ and the right bounded derived category $\Drb$ are equivalent as triangulated categories, for $X \in \Drb$, we define its length to be \footnote{Note that our definition of lengths is slightly different from that in \cite{HZ}. The length defined here counts terms between the first nonzero term (if it exists) and the last nonzero term, whereas the length defined in \cite{HZ} counts the number of differential maps between the first nonzero term (if it exists) and the last nonzero term. For a compact object, the difference of these two lengths is exactly 1.}
\begin{equation*}
l(X) = \inf \{ a(\PP) + 1 \mid \PP \in \Krb \text{ is quasi-isomorphic to } X \}.
\end{equation*}
Clearly, an object $X \in \Drb$ has finite length if and only if $X$ is quasi-isomorphic to a certain $\PP \in \K$, or equivalently, $X$ is compact.

Happel and Zacharia prove in \cite{HZ} that lengths of all indecomposable objects in $\D$ are bounded if and only if $A$ is \textit{piecewise hereditary}; that is, $\D$ is equivalent to $D^b(\mathcal{H})$, where $\mathcal{H}$ is a hereditary abelian category. Thus we may ask under what conditions the lengths of all indecomposable compact exceptional objects are bounded. For connected algebras, we show that this boundedness property is derived invariant (Theorem \ref{boundedness of compact exceptional objects}), and it implies the boundedness of lengths of tilting complexes (Proposition \ref{boundedness of tilting objects}). The following theorem gives us a criterion guaranteeing the above boundedness property.

\begin{theorem} \label{theorem I}
Let $A$ be a basic and connected finite dimensional algebra. For a simple $A$-module $S$, let $P_S$ be its projective cover and let $Q_S$ be the direct sum of indecomposable projective $A$-modules (up to isomorphism) not isomorphic to $P_S$. Suppose that for every simple module $S$, the socle of $P_S$ contains a simple summand $S_{\circ} \cong S$ satisfying the following conditions:
\begin{enumerate}
\item $S_{\circ}$ is not contained in the image of any homomorphism $P \to P_S$ for $P \in \additive(Q_S)$;
\item $S_{\circ}$ is contained in the kernel of any homomorphism $P_S \to P$ for $P \in \additive(Q_S)$.
\end{enumerate}
Then the lengths of tilting objects and indecomposable compact exceptional objects in $\D$ are bounded by the number of isomorphism classes of simple $A$-modules. Moreover, every indecomposable projective $A$-module appears at precisely one degree for every minimal tilting complex.
\end{theorem}

Note that the possibility that $S = S_{\circ}$ is excluded. Indeed, if it happens, then $P_S = S$, and hence by the second condition $A$ is a direct sum of two algebras, contradicting the assumption that $A$ is connected.

By \cite{AKL2} and \cite{AKL3}, an algebra $A$ is called \textit{bounded derived simple} if $\D$ has no nontrivial recollements by bounded derived module categories of algebras. Clearly, local algebras are bounded derived simple. But there are many other bounded derived simple algebras (\cite{LY1, LY2}). Algebras satisfying the conditions in the above theorem are bounded derived simple. That is:

\begin{theorem} \label{main result II}
Let $A$ be basic and connected finite dimensional algebra such that every simple $A$-module satisfies the conditions specified in Theorem \ref{theorem I}. Then the following statements are equivalent:
\begin{enumerate}
\item $\K$, the homotopy category of perfect complexes, has a nontrivial torsion pair $(\T, \F)$ such that either $\T$ or $\F$ is closed under degree shift;
\item $A$ is a triangular matrix algebra, and $\T$  (resp. $\F$) coincides with a thick subcategory generated by a projective $A$-module $P$ (resp. $Q$).
\end{enumerate}
In this case, $A$ is bounded derived simple.
\end{theorem}

The conditions in Theorem \ref{theorem I} seem artificial and mysterious, and the reader may have the intuition that algebras $A$ satisfying these conditions are complicated. Actually, these conditions imply that every vertex in the ordinary quiver of $A$ has at least one loop. Moreover, we can show that $A$ has infinite global dimension (if $A \ncong k$) and its finitistic dimension is 0. Therefore, $A$ does not have nice properties people prefer such as being hereditary, piecewise hereditary, quasi-hereditary, etc. It is reasonable to believe that indecomposable objects in $\D$ are complicated. However, since in this paper we are only interested in compact exceptional objects, the bad behaviors of $A$ contrarily restrict the size of these special objects as well as stratifications of $\D$. Furthermore, algebras satisfying these conditions are not rare. Actually, many weakly directed algebras and string algebras are examples. We will give an explicit algorithm in Section 4 to construct a big class of algebras satisfying these conditions.

The paper is organized as follow. In Section 2 we consider lengths of objects in derived module categories, and prove Theorem \ref{theorem I}. Stratification of bounded derived module categories is investigated in Section 3, where Theorem \ref{main result II} is proved. In Sections 4 and 5 we describe some applications, and ask several questions for which the answers are not clear to us.

In this paper we only consider finite dimensional algebras over an algebraically closed field $k$, although many results are still true in a much more general framework. All modules, unless specified explicitly, are finitely generated left modules. Composition of maps and morphisms is from right to left. The zero module is regarded as a trivial projective or free module. For a fixed object $X$ in a module category or a derived category, $\additive (X)$ is the additive category consisting of direct summands of finite direct sums of $X$, and $\Thick (X)$ is the smallest triangulated category (closed under isomorphisms, degree shift, direct summands, and finite coproducts) containing $X$.\footnote{We say $X$ \textit{classically generates} $\Thick (X)$ in this situation.} The degree shift functor $[-]$ in derived categories is as usually defined.

\section{Lengths of compact exceptional objects}

Throughout this section let $A$ be a basic and connected finite dimensional algebra over an algebraically closed field $k$ and let $A \module$ be the category of finitely generated left $A$-modules. Let $\K$ be the homotopy category of \textit{perfect complexes}; that is, complexes of finitely generated projective $A$-modules such that all but finitely many terms are zero. The following embedding and equivalence are well known:
\begin{equation*}
\K \subseteq \Krb \cong \Drb.
\end{equation*}
Moreover, $\K$ can be identified with a full subcategory of $\D$, and $\D$ is equivalent to $K^{-,b} (_A \mathcal{P})$, the homotopy category of right bounded chain complexes of finitely generated projective $A$-modules with finite homologies; see \cite{Happel}.

A complex $\PP \in \K$ is \textit{minimal} if it has no summands of the following form:
\begin{equation*}
\xymatrix{ \ldots \ar[r] & 0 \ar[r] & P \ar[r] ^{id} & P \ar[r] & 0 \ar[r] & \ldots},
\end{equation*}
where $P \in \additive (_AA)$. It is easy to see that $\PP$ is minimal if and only if every differential map $d_i: P^i \rightarrow P^{i+1}$ sends $P^i$ into the radical of $P^{i+1}$. An object $X$ is \textit{compact} (that is, the functor $\Hom_{\D} (X, -)$ commutes with small coproducts) if and only if it is quasi-isomorphic to a minimal perfect complex $\PP \in \K$ (which is unique up to isomorphism), and if and only if its length is finite. To calculate its length, we first choose a minimal perfect complex $\PP \in \K$ quasi-isomorphic to $X$, and let $r$ and $s$ be the degrees of the first and the last nonzero terms in this complex. Then $l(X) = l(\PP) = s - r +1$.

An object $X \in \D$ is \textit{exceptional} (or \textit{rigid}) if
\begin{equation*}
\Hom _{\D} (X, X[n]) \cong \Hom _{\Krb} (\PP, \PP[n]) = 0
\end{equation*}
whenever $n \neq 0$. An object $X \in \D$ is \textit{tilting} if it is quasi-isomorphic to a certain exceptional $\PP \in \K$ (called a \textit{tilting complex}) such that $\Thick (\PP) = \K$. Clearly, a tilting object is a direct sum of finitely many indecomposable compact exceptional objects. If all direct summands are pairwise nonisomorphic, we say that the tilting object is \textit{basic}.

\begin{proposition} \label{boundedness of tilting objects}
Let $A$ be a basic and connected algebra. If the lengths of all indecomposable compact exceptional objects are bounded, so are the lengths of tilting objects.
\end{proposition}

\begin{proof}
Let $T \in \D$ be a tilting object. Without loss of generality we can assume that $T$ is a tilting complex (i.e., $T \in \K$) and is basic since a tilting complex and the corresponding basic tilting complex obtained by taking one direct summand from each isomorphism class have the same length. Write $T =  \oplus _{i=1}^n T_i$, where $n$ is the number of isomorphism classes of simple $A$-modules and each $T_i$ is indecomposable. Clearly we can assume that every indecomposable summand is minimal. If $r, s\in \mathbb{Z}$ satisfy $T^r \neq 0 \neq T^s$ and $T^i = 0$ for $i > s$ or $i < r$, the length of $T$ is $s - r + 1$. Since the length of each indecomposable summand is bounded by a fixed number $m$, $l(T) \leqslant nm$ is also bounded if we can show that there is no gap among these indecomposable summands. That is, for any $r \leqslant j \leqslant s$, we have $T^j \neq 0$. But this is clear. Indeed, if it is not true, then we use this gap to decompose $T = T' \oplus T''$ such that
\begin{equation*}
\Hom _ {\K} (T', T'')  = 0 = \Hom _ {\K} (T'', T').
\end{equation*}
Consequently, $A$ is derived equivalent to $\End _{\K} (T') ^{\op} \oplus \End _{\K} (T'') ^{\op}$. This is impossible since connectedness is invariant under derived equivalences.
\end{proof}

By \cite{HZ}, boundedness of lengths of all indecomposable compact objects is equivalent to the piecewise hereditary property, which is invariant under derived equivalences. Similarly, boundedness of lengths of all indecomposable compact exceptional objects is invariant under derived equivalences, too.

\begin{theorem} \label{boundedness of compact exceptional objects}
Let $A$ and $B$ be two connected basic finite dimensional algebras. Suppose that $A$ and $B$ are derived equivalent. If lengths of indecomposable compact exceptional objects in $\D$ are bounded, then $B$ has the same property as well.
\end{theorem}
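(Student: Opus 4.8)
The plan is to transport the hypothesis along a fixed derived equivalence and to control how badly such an equivalence can distort the length of a minimal perfect complex. Fix a triangulated equivalence $G \colon D^b(A) \to D^b(B)$. It carries compact objects to compact objects, hence restricts to an equivalence $K^b(_AP) \simeq K^b(_BP)$, and it preserves indecomposability as well as the rigidity condition $\Hom(-, -[n]) = 0$ for $n \neq 0$, both being purely categorical. Thus $G$ induces a bijection between the indecomposable compact exceptional objects of $D^b(A)$ and those of $D^b(B)$, and it remains to see that lengths stay bounded under this bijection.

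The main point is a $\Hom$-theoretic reading of the length of a minimal perfect complex. If $P^{\bullet} = (P^r \to \cdots \to P^s) \in K^b(_AP)$ is minimal and $S$ is a simple $A$-module, then an arbitrary map $P^i \to S$ is automatically a chain map $P^{\bullet} \to S[i]$ (the composite with $d^{i-1}$ vanishes because $d^{i-1}(P^{i-1}) \subseteq \rad P^i$), and by minimality such a chain map is null-homotopic only when it is zero; hence $\Hom_{D^b(A)}(P^{\bullet}, S[i]) \cong \Hom_A(\operatorname{top} P^i, S)$. Consequently $P^i \neq 0$ if and only if $\Hom_{D^b(A)}(P^{\bullet}, S[i]) \neq 0$ for some simple $S$, so $l(P^{\bullet})$ equals $1 + \max\{i\} - \min\{i\}$ with the extrema taken over the integers $i$ for which $\Hom_{D^b(A)}(P^{\bullet}, S[i]) \neq 0$ for some simple $S$. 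The analogous statement holds over $B$.

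Now let $m$ bound the lengths of the indecomposable compact exceptional objects of $D^b(A)$, take an indecomposable compact exceptional $Y \in K^b(_BP)$, assumed minimal, and put $X := G^{-1}(Y)$. Then $X$ is indecomposable compact exceptional over $A$, so its minimal perfect representative has length $l(X) \le m$; say $X$ is concentrated in degrees $[r, s]$. For a simple $B$-module $S'$ we have $\Hom_{D^b(B)}(Y, S'[i]) \cong \Hom_{D^b(A)}(X, G^{-1}(S')[i])$. Since $G^{-1}(S') \in D^b(A)$ has bounded cohomology and there are only finitely many simple $B$-modules, we may fix an interval $[a, b]$ containing the cohomological support of $G^{-1}(S')$ for every simple $S'$. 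Filtering $G^{-1}(S')$ by the shifted cohomology modules $H^j(G^{-1}(S'))[-j]$, $a \le j \le b$, through the canonical truncation triangles, and using the elementary vanishing $\Hom_{D^b(A)}(X, M[t]) = 0$ for every $A$-module $M$ whenever $t \notin [-s, -r]$ (which holds because $X$ is a complex of projectives sitting in degrees $[r, s]$), one finds that $\Hom_{D^b(B)}(Y, S'[i]) = 0$ unless $i \in [a - s,\, b - r]$. Combined with the length formula above, this yields
\[ l(Y) \ \le\ (b - r) - (a - s) + 1 \ =\ (b - a) + (s - r) + 1 \ =\ (b - a) + l(X) \ \le\ (b - a) + m, \]
a bound independent of $Y$, which proves the theorem.

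I expect the only genuinely nonformal step to be the identification $\Hom_{D^b(A)}(P^{\bullet}, S[i]) \cong \Hom_A(\operatorname{top} P^i, S)$ for minimal $P^{\bullet}$ — i.e.\ the fact that the degrees occupied by a minimal perfect complex are exactly the degrees in which it receives a nonzero morphism from a stalk complex supported at a simple module. Once that is established, the rest is bookkeeping with truncation triangles and the observation that a derived equivalence sends bounded complexes to bounded complexes, and so can enlarge cohomological amplitude only by the fixed amount $b - a$. Connectedness of $A$ and $B$ plays no role in this argument; it is kept only for consistency with the surrounding results.
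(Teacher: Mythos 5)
Your argument is correct, and it takes a genuinely different route from the paper. The paper fixes a tilting complex $T \in K^b(_BP)$ with $\End(T)^{\op} \cong A$ and invokes Rickard's explicit construction of $FG(Y)$ (replace each projective term of $G(Y)$ by an object of $\additive T$ and take the total complex of the resulting bicomplex), which immediately yields the multiplicative bound $l(Y) \leqslant m\, l(T)$. You instead characterize the occupied degrees of a minimal perfect complex intrinsically --- as the degrees in which it admits a nonzero morphism to a shifted simple module, via the isomorphism $\Hom_{D^b(A)}(P^{\bullet}, S[i]) \cong \Hom_A(\operatorname{top} P^{-i}, S)$ (note the sign: with the standard convention $S[i]$ sits in degree $-i$, so the relevant term is $P^{-i}$, not $P^i$; this slip is harmless since only the difference of the extremal degrees enters) --- and then transport this description across the equivalence, controlling the cohomological amplitude $[a,b]$ of $G^{-1}(S')$ uniformly over the finitely many simple $B$-modules by d\'evissage along truncation triangles. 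Your approach buys a sharper, additive bound $l(Y) \leqslant l(X) + (b-a) \leqslant m + (b-a)$ in place of the paper's multiplicative one, and it avoids any appeal to the internal structure of Rickard's functor, using only standard facts about minimal complexes and long exact sequences; the paper's argument is shorter once Rickard's Proposition 2.10 is taken as given. Both proofs, like yours, make no real use of connectedness. The only points worth tightening in a final write-up are the index convention just mentioned and an explicit remark that the equivalence restricts to perfect complexes because compactness (equivalently, the property that $\Hom(X, Y[n])$ vanishes for $|n| \gg 0$ for every $Y$) is categorically characterized inside $D^b$.
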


\begin{proof}
Without loss of generality we only consider minimal objects in homotopy categories of perfect complexes. Assume that $l (X) \leqslant m$ for all indecomposable exceptional objects $X \in \K$. Since $D^b(B)$ is derived equivalent to $\D$, by \cite{Rickard1}, there exists a tilting complex $T \in \Kb$ such that $\End _{\Kb} (T) ^{\op} \cong A$. Moreover, $T$ induces a triangulated equivalence $F: \K \rightarrow \Kb$ such that $F(A) = T$. Let $G: \Kb \rightarrow \K$ be the quasi-inverse of $F$.

Now let $Y$ be a minimal, indecomposable exceptional object in $\Kb$. Then $G (Y)$ is an indecomposable exceptional object in $\K$, which has at most $m$ nonzero terms since by our assumption $G (Y) \in \K$ is minimal and by the given condition lengths of indecomposable exceptional objects in $\K$ are bounded by $m$.

Note that $FG (Y)$ is constructed as follows (Proposition 2.10 in \cite{Rickard1}). First, for all $i \in \mathbb{Z}$, nonzero $G(Y)^i$ (which are projective $A$-modules) are replaced by objects in $\additive (T)$ to obtain a bigraded complex $Y^{\ast \ast}$ over $\additive (B)$. Then we take the total complex $Y^{\ast}$ of $Y^{\ast \ast}$ and define $FG (Y) = Y^{\ast}$. Clearly, $Y \cong FG (Y) = Y^{\ast}$, and $l (Y^{\ast}) \leqslant m + l(T)$, where $l(T)$ is the length of $T$. That is, lengths of indecomposable exceptional objects in $\Kb$ are bounded by $m + l(T)$.
\end{proof}

The following lemmas and their corollaries will be used frequently.

\begin{lemma} \label{equivalent conditions}
Let $S$, $P_S$, $S_{\circ}$ and $Q_S$ be as defined in Theorem \ref{theorem I} and suppose that the two conditions in it hold. Then every homomorphism $\alpha: P_S \to P$, where $P$ is projective, is either a split monomorphism or has $S_{\circ}$ in the kernel.
\end{lemma}

\begin{proof}
Write $P = P' \oplus Q'$ with $P' \in \additive(P_S)$ and $Q' \in \additive (Q_S)$. If $\alpha$ is not a split monomorphism, then $S_{\circ}$ is in the kernel of the induced map $P_S \to P'$. But $S_{\circ}$ also lies in the kernel of the induced map $P_S \to Q'$ by the assumption. The conclusion follows.
\end{proof}

Let $\PP \in \K$. We say that an indecomposable projective $A$-module $P$ \textit{appears at degree i} if $P^i$ has a summand isomorphic to $P$. The following lemma is crucial to prove Theorem \ref{theorem I}.

\begin{lemma} \label{orthogonal relation}
Let $S$, $P_S$, $S_{\circ}$ and $Q_S$ be as defined in Theorem \ref{theorem I} and suppose that the two conditions in it hold. If $\PP$ and $Q^{\bullet}$ are objects in $\K$ such that $r$ is the last degree of $\PP$ and $s$ the first degree of $Q^{\bullet}$ where $P_S$ appears, then $\Hom _{\K} (\PP[r], Q^{\bullet}[s]) \neq 0$.
\end{lemma}

\begin{proof}
By assumption, we have the following decompositions: $P^r \cong P_S \oplus P'$, and $P^s \cong P_S \oplus Q'$. Now we construct a chain map as follows:
\begin{equation*}
\xymatrix{ \ldots \ar[r] ^-{d_{s-3}} & P^{s-2} \ar[r] ^-{d_{s-2}} \ar[d]^0 & P^{s-1} \ar[r] ^-{d_{s-1}} \ar[d]^0 & P_S \oplus P' \ar[r] ^-{d_s} \ar[d] ^{\tilde{\alpha}} \ar@{-->}[ld] _{h_s} & P^{s+1} \ar[r] ^-{d_{s+1}} \ar[d]^0 \ar@{-->}[ld] _{h_{s+1}} & \ldots \\
\ldots \ar[r] ^-{d_{r-3}} & Q^{r-2} \ar[r] ^-{d_{r-2}} & Q^{r-1} \ar[r] ^-{d_{r-1}} & P_S \oplus Q' \ar[r] ^-{d_r} & Q^{r+1} \ar[r] ^-{d_{r+1}} & \ldots ,}
\end{equation*}
where $\tilde{\alpha} = \begin{bmatrix} \alpha & 0 \\ 0 & 0 \end{bmatrix}$, and $\alpha$ maps the top of $P_S$ to $S_{\circ}$ in the socle of $P_S$.

We check that $\tilde{\alpha} d_{s-1} = 0$ and $d_r \tilde{\alpha} = 0$. Indeed, since $\PP$ is minimal, $d_{s-1} (P^{s-1})$ is contained in the radical of $P^s$. But $\tilde{\alpha}$ maps the radical of $P^s$ to 0, so $\tilde{\alpha} d_{s-1} = 0$. Besides, by definition, $\tilde{\alpha} (P^s) = S_{\circ} \subseteq P_S \subseteq Q^r$. Since $Q^r$ is the last degree where $P_S$ appears, the map $P_S \to Q^{r+1}$ induced by $d_r$ cannot be a split monomorphism. By the previous lemma, $S_{\circ}$ is contained in the kernel of this induced map. Consequently, $d_r \tilde{\alpha} = 0$.

The chain map is not null homotopic. Otherwise, there must exist maps $h_s: P^s \rightarrow Q^{r-1}$ and $h_{s+1}: P^{s+1} \rightarrow Q^r$ such that $\tilde{\alpha} = h_sd_{r-1} + h_{s+1} d_r$. In particular, $\alpha$ is the composite of the following maps:
\begin{equation*}
\xymatrix{P_S \ar[rrr] ^-{\begin{bmatrix} h_s \iota \\ d_s \iota \end{bmatrix}} & & & Q^{r-1} \oplus P^{s+1} \ar[rrr] ^-{\begin{bmatrix} pd_{r-1} & ph_{s+1} \end{bmatrix}} & & & P_S},
\end{equation*}
where $\iota: P_S \rightarrow P^s$ and $p: Q^r \rightarrow P_S$ are the inclusion and projection respectively. But this is impossible. Indeed, since $r$ and $s$ are the first and the last degrees where $P_S$ appears, $Q^{r-1} \oplus P^{s+1}$ has no summand isomorphic to $P_S$. Therefore, by condition (1) in Theorem \ref{theorem I}, $S_{\circ} \subseteq P_S$ is not in the image of $\begin{bmatrix} pd_{r-1} & ph_{s+1} \end{bmatrix}$, and hence is not in the image of $\alpha$ by the above factorization. This contradicts the definition $\alpha$, and our claim is proved.
\end{proof}

Several immediate corollaries of this lemma are:

\begin{corollary} \label{only one degree}
Let $S$, $P_S$, $Q_S$, and $S_{\circ}$ be as defined in Theorem \ref{theorem I}, and suppose that the two conditions in it hold. Let $\PP \in \K$ be a minimal exceptional object. Then $P_S$ appears at most one degree of $\PP$.
\end{corollary}

\begin{proof}
Suppose that $P_S$ appears at more than one degrees of $\PP$. Let $r$ and $s$ be the first degree and the last degree where $P_S$ appears. By the previous lemma, $\Hom_{\K} (\PP, \PP [s-r]) \neq 0$, where $s-r \neq 0$. This contradicts the assumption that $\PP$ is exceptional.
\end{proof}

\begin{corollary} \label{only one object}
Let $A$ and $P_S$ be as in Lemma \ref{orthogonal relation}. If $\PP, \QQ \in \K$ are two minimal objects satisfying $\Hom_{\K} (\PP, \QQ[n]) = 0$ for all $n \in \mathbb{Z}$, then $P_S$ cannot appear in both $\PP$ and $\QQ$.
\end{corollary}

\begin{proof}
Suppose that $P_S$ appears in both $\PP$ and $\QQ$. Applying degree shift if necessary, we can assume that both the last degree of $\PP$ where $P_S$ appears in $\PP$ and the first degree of $\QQ$ where $P_S$ appears in $\QQ$ are 0. Thus $\Hom_{\K} (\PP, \QQ) \neq 0$ by Lemma \ref{orthogonal relation}, contradicting the assumption.
\end{proof}

Now we restate and prove Theorem \ref{theorem I}.

\begin{theorem} \label{main result I}
Let $A$ be a basic and connected finite dimensional algebra, and suppose that all simple $A$-modules satisfy the conditions in Theorem \ref{theorem I}. Then the lengths of tilting objects and indecomposable compact exceptional objects $\D$ are bounded by the number of isomorphism classes of simple $A$-modules. Moreover, every indecomposable projective $A$-module appears at precisely one degree for every minimal tilting complex.
\end{theorem}

\begin{proof}
Take an indecomposable exceptional object $\PP \in \K$. Without loss of generality we can assume that $\PP$ is minimal. Let $r, s \in \mathbb{Z}$ such that $P^r$ and $P^s$ are the first and last nonzero terms in $\PP$. Then $l(\PP) = s - r +1$ by our definition. Since $\PP$ is indecomposable, for $r \leqslant t \leqslant s$, $P^t \neq 0$. However, by Corollary \ref{only one degree}, every indecomposable projective $A$-module (up to isomorphism) appears at no more than one degree in $\PP$. Therefore, $\PP$ can have at most $n$ nonzero terms, where $n$ is the number of isomorphism classes of simple $A$-modules. That is, $l(\PP) = s - r + 1 \leqslant n$.

Let $T \in \K$ be a minimal tilting complex. By Corollary \ref{only one degree}, every indecomposable projective $A$-module appears at no more than one degree of $T$. But since $\Thick (T) = \K$, every indecomposable projective $A$-module must appear at some degree of $T$. Otherwise, suppose that $P_e \cong Ae$ does not appear, where $e \in A$ is a primitive idempotent. Then
\begin{equation*}
\Thick (T) \subseteq \Thick (A(1-e)) \neq \K.
\end{equation*}
This is impossible. Therefore, every indecomposable projective $A$-module appears at precisely one degree of $T$.

Observe that $T$ must be \textit{connected} as in the proof of Lemma \ref{boundedness of tilting objects}. That is, if $r, s \in \mathbb{Z}$ satisfy $T^r \neq 0 \neq T^s$ and $T^i = 0$ for $i>s$ or $i<r$, then $T^j \neq 0$ for $r \leqslant j \leqslant s$. Therefore, the length of $T$ must be bounded by the number of isomorphism classes of simple $A$-modules by the second statement.
\end{proof}

In the following example we use the above results to classify the derived equivalence classes of a particular algebra $A$. A similar example has been considered in \cite[Example 5.13]{AKLY}, and in Section 4 we will consider a more general construction.

\begin{example} \normalfont \label{example}
Let $A$ be the path algebra of the following quiver with relations $\delta^2 = \rho^2 = \rho \alpha = \alpha \delta = 0$.
\begin{equation*}
\xymatrix {x \ar@(ld, lu)|[] {\delta} \ar[r] ^{\alpha} & y\ar@(rd, ru)|[] {\rho}}
\end{equation*}
Up to isomorphism, this algebra has two indecomposable projective modules as follows:
\begin{equation*}
P_x = \begin{matrix} & x & \\ x & & y \end{matrix}, \qquad P_y = \begin{matrix} y \\ y \end{matrix},
\end{equation*}
both of which satisfy the conditions in Theorem \ref{theorem I}.

By Corollary \ref{only one degree}, minimal indecomposable exceptional objects in $\K$ have lengths at most 2. Moreover, if $l(\PP) = 2$, then there exists a certain $i \in \mathbb{Z}$ such that $P^i \cong P_y^{\oplus a}$ and $P^{i+1} \cong P_x ^{\oplus b}$ with $a, b \geqslant 1$. We can check that $\PP$ is indecomposable if and only if $a = b =1$. In conclusion, up to degree shift and quasi-isomorphisms, $\K$ only has three indecomposable exceptional objects: stalk complexes $P_x$ and $P_y$, and
\begin{equation*}
X := \xymatrix{P_y \ar[r] ^d & P_x},
\end{equation*}
where $d$ maps the top of $P_y$ onto the simple summand $S_y$ in the socle of $P_x$.

Using Theorem \ref{main result I}, the reader can check that up to degree shift and quasi-isomorphism $\K$ has three basic tilting complexes: $T_1 = P_x \oplus P_y$, $T_2 = P_x[-1] \oplus X$, and $T_3 = P_y \oplus X$. Clearly, $\End_{\K} (T_1) ^{\op} \cong A$.

By computation, $B^{\op} = \End_{\K} (T_2)$ is isomorphic to the path algebra of the following quiver with relations $\beta \alpha \beta = 0 = \delta^2$, $\delta \alpha = \beta \delta = 0$:
\begin{equation*}
\xymatrix {x \ar@/^.5pc/[rr] ^{\alpha} & & y\ar@(rd, ru)|[] {\delta} \ar@/^.5pc/[ll] ^{\beta}}.
\end{equation*}
Similarly, by computation, $C^{\op} = \End_{\K} (T_3)$ is isomorphic to the path algebra of the following quiver with relations $\alpha \beta \alpha = 0 = \delta^2$, $\delta \alpha = \beta \delta = 0$:
\begin{equation*}
\xymatrix {x \ar@/^.5pc/[rr] ^{\alpha} & & y \ar@/^.5pc/[ll] ^{\beta} \ar@(rd, ru)|[] {\delta}}.
\end{equation*}
We conclude that up to Morita equivalence, $A$ is derived to three algebras: $A$, $B$, and $C$ as they lie in different Morita equivalence classes.

A careful observation tells us that $B \cong C^{\op}$. This is reasonable since $A \cong A^{\op}$. Moreover, $B$ has a tilting complex $T = Be_x [1] \oplus M$ where $M \cong Be_y /Be_x$. It is easy to check that $\End_B (T) ^{\op} \cong A$.
\end{example}

\begin{proposition}
Let $A$ be a basic and connected finite dimensional algebra, and suppose that all simple $A$-modules satisfy the conditions in Theorem \ref{theorem I}. Then the finitistic dimension of $A$ is 0.
\end{proposition}

Recall that the \emph{finitistic dimension} of $A$ is the supremum of projective dimensions of indecomposable objects in $A\module$ with finite projective dimension. The finitistic dimension of $A$ equals 0 if and only if all finitely generated $A$-modules having finite projective dimension are projective.

\begin{proof}
Suppose that the conclusion is wrong. Then we can find some $M \in A\module$ such that the projective dimension of $M$ is $n$ with $n > 0$. Take a minimal projective resolution of $M$ as follows
\begin{equation*}
0 \rightarrow P^n \rightarrow P^{n-1} \rightarrow \ldots \rightarrow P^0 \rightarrow 0.
\end{equation*}
By our assumption, $P^n \neq 0$, and the map $d_n: P^n \rightarrow P^{n-1}$ must be injective.

Take a nonzero indecomposable summand $P_S$ of $P^n$ and denote by $\iota$ the inclusion $P_S \rightarrow P^n$. Since the projective resolution is minimal, $\iota$ cannot be a split monomorphism. By Lemma \ref{equivalent conditions}, $\iota$ and hence $d_n$ cannot be injective. The conclusion follows by the contradiction.
\end{proof}

\section{Stratification of bounded derived module categories}

In this section we consider stratification of bounded derived module categories of algebras satisfying the condition specified in Theorem \ref{theorem I}, showing that they are bounded derived simple. The following definition is taken from \cite[Section 2, Chapter I]{BR}.

\begin{definition}
A pair of strict full subcategories $(\T, \F)$ of a triangulated category $\mathcal{C}$ is called a torsion pair if the following conditions hold:
\begin{enumerate}
\item $\mathcal{C} (T, F) = 0$ for any $T \in \T$ and $F \in \F$.
\item $T \in \T$ implies $T[1] \in \T$; and $F \in \F$ implies $F[-1] \in \F$.
\item For any $X \in \mathcal{C}$, there is a triangle (which is unique up to isomorphism)
\begin{equation*}
\xymatrix{T_X \ar[r] & X \ar[r] & F_X \ar[r] & T_X[1]}.
\end{equation*}
\end{enumerate}
\end{definition}

Note that $\T$ is the \textit{left perpendicular category} of $\F$, and $\F$ is the \textit{right perpendicular category} of $\T$ (see \cite{GL}).

The following key observation is crucial to prove the main result of this section.

\begin{lemma} \label{orthogonal categories}
Let $A$ be a finite dimensional algebra. Let $e$ and $f$ be two orthogonal idempotents such that $e + f = 1$. If there exists a torsion pair $(\T, \F)$ of $\K$ such that $\T \subseteq \Thick (Ae)$, $\F \subseteq \Thick (Af)$, then $\T = \Thick (Ae)$ and $\F = \Thick (Af)$.
\end{lemma}

\begin{proof}
We only need to show the other inclusions. Take an arbitrary object $U \in \Thick (Af)$. Since $(\T, \F)$ is a torsion pair of $\K$, there is a canonical triangle
\begin{equation*}
\xymatrix{V \ar[r] & U \ar[r] & W \ar[r] & V[1]}
\end{equation*}
with $V \in \T \subseteq \Thick (Ae)$ and $W \in \F \subseteq \Thick (Af)$. However, since both $U$ and $W$ are contained in $\Thick (Af)$, so is $V$. Therefore, $V \in \Thick (Ae) \cap \Thick (Af)$. We claim that $V$ is quasi-isomorphic to 0. If this holds, then $U \cong W \in \F$, so $\Thick (Af) \subseteq \F$.

Indeed, since $V \in \Thick (Ae)$, we get a minimal representation of $V$
\begin{equation*}
\PP: \quad \ldots \rightarrow 0 \rightarrow P^r \rightarrow \ldots \rightarrow P^s \rightarrow 0 \rightarrow \ldots
\end{equation*}
such that all $P^i$ are contained in $\additive (Ae)$. Similarly, since $V \in \Thick (Af)$, we get another minimal representation of $V$
\begin{equation*}
Q^{\bullet}: \quad \ldots \rightarrow 0 \rightarrow Q^l \rightarrow \ldots \rightarrow Q^t \rightarrow 0 \rightarrow \ldots
\end{equation*}
such that all $Q^i$ are contained in $\additive (Af)$.

Now let us regard $V$ and its two representations as objects in $D^b(A)$. If $V \neq 0$, then there is a simple module $X$ and a certain $n \in \mathbb{Z}$ with $\Hom _{D^b(A)} (V, X[n]) \neq 0$. Consequently, we have both $\Hom _{D^b(A)} (P^{\bullet}, X[n]) \neq 0$ and $\Hom _{D^b(A)} (Q^{\bullet}, X[n]) \neq 0$. However, this impossible since terms in $P^{\bullet}$ and $Q^{\bullet}$ have no isomorphic indecomposable summands. Therefore, $V$ is quasi-isomorphic to 0, and our claim is proved.

We have shown $\F \supseteq \Thick (Af)$. The fact that $\T \supseteq \Thick (Ae)$ can be shown by the same argument. This finishes the proof.
\end{proof}

An immediate corollary is:

\begin{corollary}
Let $\T, \F, e, f$ be as in the previous proposition. Then $A$ is isomorphic to the triangular matrix algebra (see \cite{Li4})
\begin{equation*}
\begin{bmatrix} eAe & 0 \\ fAe & fAf \end{bmatrix}.
\end{equation*}
\end{corollary}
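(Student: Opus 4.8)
The plan is to reduce the statement to the single vanishing $\Hom_A(Ae, Af) = 0$ and then read off the matrix structure. First I would apply the previous lemma: since $X \in \Tria Ae$, $Y \in \Tria Af$, and $(\Tria X, \Tria Y)$ is a torsion pair of $K^b(_AP)$, that lemma gives $\Tria X = \Tria Ae$ and $\Tria Y = \Tria Af$. Hence $(\Tria Ae, \Tria Af)$ is itself a torsion pair of $K^b(_AP)$, and condition (1) of Definition 3.1, applied with $T = Ae \in \Tria Ae$ and $F = Af \in \Tria Af$, yields $\Hom_{K^b(_AP)}(Ae, Af) = 0$. Note that only this degree-zero Hom-space is needed, so no vanishing of higher maps is invoked.

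Next I would transport this vanishing into the algebra. Regarded as stalk complexes concentrated in degree $0$, the projective modules $Ae$ and $Af$ admit no nonzero chain homotopies between them, so $\Hom_{K^b(_AP)}(Ae, Af) = \Hom_A(Ae, Af)$; composing with the evaluation-at-$e$ isomorphism $\Hom_A(Ae, M) \cong eM$ then gives $\Hom_A(Ae, Af) \cong e(Af) = eAf$. Combining, $eAf = 0$.

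Finally, writing $1 = e + f$ as a sum of orthogonal idempotents, the Peirce decomposition $A = eAe \oplus eAf \oplus fAe \oplus fAf$ of $k$-vector spaces collapses to $A = eAe \oplus fAe \oplus fAf$ once $eAf = 0$. Using $ef = fe = 0$ and $eAf = 0$, a short check shows that in a product $(a + c + d)(a' + c' + d')$ with $a, a' \in eAe$, $c, c' \in fAe$, $d, d' \in fAf$, every cross-term involving a factor $ef$ or $fe$ vanishes, leaving $aa' \in eAe$, $ca' + dc' \in fAe$, and $dd' \in fAf$. This is exactly the multiplication rule of $\begin{bmatrix} eAe & 0 \\ fAe & fAf \end{bmatrix}$, so $a + c + d \mapsto \begin{bmatrix} a & 0 \\ c & d \end{bmatrix}$ is the desired algebra isomorphism.

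I do not anticipate a real obstacle: all the substance is already contained in the previous lemma, and what remains --- the Hom-to-idempotent identification and the Peirce computation --- is routine. The only point I would make explicit is that the torsion-pair axiom itself supplies precisely the degree-zero vanishing $\Hom_{K^b(_AP)}(Ae, Af) = 0$ that the argument uses.
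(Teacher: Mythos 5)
Your proposal is correct and follows essentially the same route as the paper, which simply notes that $Ae \in \Tria X$, $Af \in \Tria Y$, and the torsion-pair axiom force $\Hom_A(Ae,Af) \cong eAf = 0$, whence the Peirce decomposition gives the triangular matrix form. You have merely filled in the routine details that the paper dismisses as ``clear.''
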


\begin{proof}
Since $Ae \in \T$, $Af \in \F$, and $(\T, \F)$ is a torsion pair of $\K$, one should have
\begin{equation*}
eAf \cong \Hom_A (Ae, Af) \cong \Hom_{\K} (Ae, Af) = 0.
\end{equation*}
\end{proof}

The following proposition implies the first part of Theorem \ref{main result II}.

\begin{proposition}
Let $A$ be a basic and connected finite dimensional algebra, and suppose that all simple $A$-modules satisfy the conditions in Theorem \ref{theorem I}. If $(\T, \F)$ is a torsion pair of $\K$ such that either $\T$ or $\F$ is closed under degree shift, then it coincides with a torsion pair $(\Thick (P), \Thick (Q))$ induced by two projective $A$-modules $P$ and $Q$.
\end{proposition}

\begin{proof}
let $E$ be a complete set of primitive orthogonal idempotents of $A$, and define
\begin{align*}
& E_1 = \{ \epsilon \in E \mid A\epsilon \text{ appears in a certain minimal } X \in \T \};\\
& E_2 = \{ \epsilon \in E \mid A\epsilon \text{ appears in a certain minimal } Y \in \F \}.
\end{align*}
Define $e = \sum _{\epsilon \in E_1} \epsilon$ and $f = \sum _{\epsilon \in E_2} \epsilon$. Since $(\T, \F)$ is a torsion pair of $\K$ and either $\T$ or $\F$ is closed under shifts, on one hand $\Hom _{\K} (X, Y[n]) = 0$ for all $n \in \mathbb{Z}$, $X \in \T$ and $Y \in \F$, so $E_1 \cap E_2 = \emptyset$ by Corollary \ref{only one object}; on the other hand, $E_1 \cup E_2 = E$ since objects in $\T$ and $\F$ classically generated $\K$. Consequently, $e + f = 1$. Thus by Lemma \ref{orthogonal categories} $\T = \Thick (Ae)$ and $\F = \Thick (Af)$.
\end{proof}

Now we define recollements of triangulated categories; for more details, see \cite{AKL1, AKL2, AKL3, AKLY, CX, Koenig, NS}.

\begin{definition}
A recollement of a triangulated category $\mathcal{C}$ by triangulated categories $\mathcal{D}$ and $\mathcal{E}$ is expressed diagrammatically as follows
\begin{equation*}
\xymatrix{\mathcal{D} \ar[rr] ^{i_{\ast}} & & \mathcal{C} \ar[rr] ^{j^{\ast}} \ar@/_1.5pc/[ll] _{i^{\ast}} \ar@/^1.5pc/[ll] _{i^!} & & \mathcal{E} \ar@/^1.5pc/[ll] _{j_{\ast}} \ar@/_1.5pc/[ll] _{j_!}}
\end{equation*}
with six exact, additive triangulated functors $i^{\ast}, i_{\ast}, i^!, j^!, j^{\ast}, j_{\ast}$ satisfying the following conditions:
\begin{enumerate}
\item $(i^{\ast}, i_{\ast}, i^!)$, and $(j_!, j^{\ast}, j_{\ast})$ both are adjoint triples;
\item $i_{\ast}, j_!$ and $j_{\ast}$ are fully faithful;
\item $i^! j_{\ast} = 0$;
\item for each $X \in \mathcal{C}$, there are triangles
\begin{align*}
\xymatrix{i_{\ast}i^! (X) \ar[r] & X \ar[r] & j_{\ast} j^{\ast} (X) \ar[r] & i_{\ast}i^! (X)[1]},\\
\xymatrix{j_!j^{\ast} (X) \ar[r] & X \ar[r] & i_{\ast} i^{\ast} (X) \ar[r] & j_!j^{\ast} (X)[1].}
\end{align*}
\end{enumerate}
\end{definition}

The following theorem, proved in \cite{AKL2}, gives a relation between recollements of derived module categories and exceptional compact objects.

\begin{theorem}
\cite[Corollary 2.5]{AKL2} Let $\Gamma$ be a finite dimensional algebra. If the bounded derived category $D^b(\Gamma)$ is a recollement of $D^b(B)$ and $D^b(C)$, then there are objects $T_1, T_2 \in D^b(\Gamma)$ satisfying:
\begin{enumerate}
\item Both $T_1$ and $T_2$ are compact and exceptional;
\item $\Hom _{D^b(\Gamma)} (T_1, T_2[n]) = 0$ for all $n \in \mathbb{Z}$;
\item $T_1 \oplus T_2$ generates $D^b(\Gamma)$ as triangulated category. \footnote{Here we say $T_1 \oplus T_2$ generates $D^b(\Gamma)$ if for any nonzero object $X \in D^b(\Gamma)$, there is a certain $n \in \mathbb{Z}$ such that $\Hom_{D^b (\Gamma)} (T_1 \oplus T_2, X[n]) \neq 0$. This holds if $\Thick (T_1 \oplus T_2) = \K$; i.e., $T_1 \oplus T_2$ classically generates $\K$.}
\end{enumerate}
Moreover, we have $\End _{D^b(\Gamma)} (T_1) ^{\op} \cong C$ and $\End _{D^b(\Gamma)} (T_2) ^{\op} \cong B$.
\end{theorem}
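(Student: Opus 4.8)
Since this is Corollary 2.5 of \cite{AKL2}, the plan is to reconstruct its proof directly from the recollement data of Definition 3.5, producing $T_1$ and $T_2$ from the left regular modules of $C$ and $B$. First I would set
\[ T_2 = i_*({}_BB), \qquad T_1 = j_!({}_CC), \]
where ${}_BB$ and ${}_CC$ are viewed as stalk complexes in degree zero in $\mathcal{D} = D^b(B)$ and $\mathcal{E} = D^b(C)$. The exceptional property and the endomorphism identifications are then immediate from full faithfulness. Because $i_*$ is fully faithful, $\Hom_{D^b(\Gamma)}(T_2, T_2[n]) \cong \Hom_{D^b(B)}(B, B[n]) = \Ext^n_B(B, B)$, which vanishes for $n \neq 0$ and equals $\End_B(B) \cong B^{\op}$ for $n = 0$; hence $\End_{D^b(\Gamma)}(T_2)^{\op} \cong B$. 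Applying the same reasoning to the fully faithful functor $j_!$ shows that $T_1$ is exceptional with $\End_{D^b(\Gamma)}(T_1)^{\op} \cong C$.

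For condition (2) I would first record the standard consequence of the axioms that $j^* i_* = 0$: from $i^! j_* = 0$ and the adjunction $i_* \dashv i^!$ one gets $\Hom_{D^b(\Gamma)}(i_* Y, j_* Z) = 0$ for all $Y, Z$, and then the adjunction $j^* \dashv j_*$ forces $j^* i_* = 0$. Using $j_! \dashv j^*$ I then obtain
\[ \Hom_{D^b(\Gamma)}(T_1, T_2[n]) \cong \Hom_{D^b(C)}(C, (j^* i_*)(B)[n]) = 0 \]
for every $n \in \mathbb{Z}$, which is condition (2).

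Condition (3), in the weak sense of the footnote, I would establish by contradiction. Suppose $X \neq 0$ but $\Hom_{D^b(\Gamma)}(T_1 \oplus T_2, X[n]) = 0$ for all $n$. The adjunctions $j_! \dashv j^*$ and $i_* \dashv i^!$ identify these groups with $\Hom_{D^b(C)}(C, (j^* X)[n])$ and $\Hom_{D^b(B)}(B, (i^! X)[n])$, which are the cohomology groups of $j^* X$ and $i^! X$ since $C$ and $B$ are the regular modules. Their vanishing in all degrees forces $j^* X = 0$ and $i^! X = 0$, so $i_* i^!(X) = 0$ and $j_* j^*(X) = 0$; the defining triangle $i_* i^!(X) \to X \to j_* j^*(X) \to i_* i^!(X)[1]$ then gives $X \cong 0$, a contradiction.

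The genuine obstacle is the compactness assertion in condition (1), i.e. that $T_1$ and $T_2$ are perfect complexes in $D^b(\Gamma)$. This does not follow formally, since neither $j_!$ nor $i_*$ preserves compactness for abstract reasons, and the bounded derived categories carry no infinite coproducts to which the usual ``adjoint preserves coproducts'' criterion could be applied. Here one must exploit the finiteness of $\Gamma$, $B$ and $C$: the recollement of bounded derived categories lifts to one of the unbounded categories $D(\Gamma)$, $D(B)$, $D(C)$, in which ${}_\Gamma\Gamma$ is a compact generator, and a direct analysis of the six functors shows that $i_*({}_BB)$ and $j_!({}_CC)$ are sent to compact, hence perfect, objects. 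This lifting and compactness analysis is the technical core of \cite{AKL2}, and I would import it rather than reprove it.
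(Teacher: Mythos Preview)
The paper does not supply its own proof of this statement: it is quoted verbatim as Corollary~2.5 of \cite{AKL2} and used as a black box in the subsequent argument (Theorem~3.7). So there is no in-paper proof to compare against. That said, your reconstruction is the standard one and is correct where it makes claims. Setting $T_1 = j_!({}_CC)$ and $T_2 = i_*({}_BB)$, the exceptionality and the endomorphism identifications follow from full faithfulness of $j_!$ and $i_*$; the orthogonality (2) follows from $j^* i_* = 0$, which you derive correctly from axiom~(3) and the adjunctions; and the generation statement (3) follows from the canonical triangle together with $j^*X = 0 = i^!X$ forcing $X \cong 0$. Your identification of compactness as the one nonformal point is also accurate: this is exactly where \cite{AKL2} does the real work (lifting the recollement to the unbounded level and exploiting that $j_!$ and $i_*$ then preserve compact objects), and importing it is the appropriate choice here. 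In short, your proposal matches the argument behind the cited result; the paper itself simply invokes it.
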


According to \cite{LY1}, an algebra $\Gamma$ is said to be \textit{bounded derived simple} if $D^b(\Gamma)$ has no nontrivial recollements by bounded derived module categories of algebras.

We restate the second part of Theorem \ref{main result II}.

\begin{theorem}
Let $A$ be a basic and connected finite dimensional algebra, and suppose that all simple $A$-modules satisfy the conditions in Theorem \ref{theorem I}. Then $A$ is bounded derived simple.
\end{theorem}

\begin{proof}
This is clearly true if $A$ is a local algebra. Suppose that $A$ is not local and is not bounded derived simple. Therefore, by \cite[Theorem 5.12]{AKLY}, $A$ is not $\K$-simple either. That is, the homotopy category $\K$ has a nontrivial recollement as follows
\begin{equation*}
\xymatrix{\Ks \ar[rr] ^{i_{\ast}} & & \K \ar[rr] ^{j^{\ast}} \ar@/_1.5pc/[ll] _{i^{\ast}} \ar@/^1.5pc/[ll] _{i^!} & & \Kr \ar@/^1.5pc/[ll] _{j_{\ast}} \ar@/_1.5pc/[ll] _{j_!}},
\end{equation*}
where $R$ and $S$ are finite dimensional algebras. It is clear that $i_{\ast} S$, $j_! R$, and $j_{\ast} R$ are all compact. Without loss of generality we can assume that they are minimal. Since $\Ks = \Thick (S)$ and $i_{\ast}$ is a full embedding, we have $\image i_{\ast} = \Thick (i_{\ast} S)$. Similarly, $\image j_! = \Thick (j_! R)$, and $\image j_{\ast} = \Thick (j_{\ast} R)$.

Let $E$ be a complete set of primitive orthogonal idempotents of $A$, and let $X$ (resp. $Y$ and $Z$) be a minimal object in $K^b (_A \mathcal{P})$ isomorphic to $j_! R$ (resp. $i_{\ast} S$ and $j_{\ast} R$). Define
\begin{align*}
& E_1 = \{ \epsilon \in E \mid A\epsilon \text{ appears in } X \},\\
& E_2 = \{ \epsilon \in E \mid A\epsilon \text{ appears in } Y \},\\
& E_3 = \{ \epsilon \in E \mid A\epsilon \text{ appears in } Z \}.
\end{align*}
Correspondingly, let
\begin{equation*}
e = \sum _{\epsilon \in E_1} \epsilon, \quad f = \sum _{\epsilon \in E_2} \epsilon, \quad \lambda = \sum _{\epsilon \in E_3} \epsilon.
\end{equation*}

By \cite[Lemma 2.6]{CX}, $(\Thick (j_!R), \Thick (i_{\ast} S))$ is a torsion pair of $\K$. By Proposition \ref{orthogonal categories} and its proof, we know
\begin{equation*}
e + f = 1, \quad \Thick (j_!R) = \Thick (Ae), \quad \Thick (i_{\ast}S) = \Thick (Af).
\end{equation*}
Therefore, $eAf \cong \Hom_A (Ae, Af) = 0$.

By \cite[Lemma 2.6]{CX}, $(\Thick (i_{\ast} S), \Thick (j_{\ast} R))$ is a torsion pair of $\K$, too. Again, by Proposition \ref{orthogonal categories} and its proof, we know
\begin{equation*}
f + \lambda = 1, \quad \Thick (j_{\ast} R) = \Thick (A\lambda).
\end{equation*}
Consequently,
\begin{equation*}
\lambda = e, \quad \Thick (A \lambda) = \Thick (Ae) \quad \Rightarrow \quad fAe \cong \Hom_A (Af, Ae) = 0.
\end{equation*}
But this implies that $A \cong eAe \oplus fAf$ since $eAf = 0 = fAe$, contradicting the assumption that $A$ is connected. The conclusion follows.
\end{proof}

However, the algebras $A$ in the above theorem in general are not \textit{derived simple}. Indeed, the algebra $A$ in Example \ref{example} is bounded derived simple by the above theorem, but it is not derived simple. Actually, for every triangular matrix algebra $A = (eAe, fAf, fAe)$, as we pointed out in \cite{Li4} $D(A)$ always has the following nontrivial recollement
\begin{equation*}
\xymatrix{D(fAf) \ar[rr] ^{i_{\ast}} & & D(A) \ar[rr] ^{j^{\ast}} \ar@/_1.5pc/[ll] _{i^{\ast}} \ar@/^1.5pc/[ll] _{i^!} & & D(eAe) \ar@/^1.5pc/[ll] _{j_{\ast}} \ar@/_1.5pc/[ll] _{j_!}},
\end{equation*}
where the functors are specified as in Proposition 3.6 of \cite{Li4}.

\section{Weakly directed algebras}

In this section we apply the results in the previous sections to a special class of algebras. As before, we only consider connected and basic finite dimensional algebras over an algebraically closed field $k$.

\begin{definition}
A finite dimensional algebra $A$ is said to be weakly directed if there is a complete set of primitive orthogonal idempotents $E = \{ e_i \} _{i=1}^n$ such that $\Hom_A (Ae_j, Ae_i) \cong e_j A e_i \neq 0$ only if $j \geqslant i$.
\end{definition}

Since the field is algebraically closed, $A$ is weakly directed if and only if oriented cycles in the ordinary quiver are all loops. Examples of weakly directed algebras include quotient algebras of finite dimensional hereditary algebras, local algebras, category algebras of skeletal finite EI categories (\cite{Li1}), extension algebras of standard modules of standardly stratified algebras (\cite{Li2}).

The following lemma is described in \cite{Li3}.

\begin{lemma}\label{simple modules}
Let $A$ be a weakly directed algebra. Then every simple $A$-module can be identified with a simple $eAe$-module for some $e \in E$. Moreover, for any $M \in A\module$ and $f \in E$, the left $fAf$-module $fM$ is also an $A$-module.
\end{lemma}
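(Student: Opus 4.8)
The lemma consists of two essentially independent statements. The first one --- that every simple $A$-module can be identified with a simple $eAe$-module for some $e\in E$ --- is a general fact about basic algebras over an algebraically closed field and does not actually use weak directedness; the plan is to note that, since $A$ is basic, $A/\rad A\cong k^n$ with the residue classes of the $e_i$ as its primitive idempotents, so every simple $A$-module $S$ is one-dimensional and there is a unique $i$ with $e_iS\neq 0$. Writing $S\cong Ae_i/\rad(Ae_i)$ for that $i$, one gets $e_iS\cong e_iAe_i/\rad(e_iAe_i)$, the unique simple module over the local algebra $e_iAe_i$, whereas $e_jAe_i\subseteq\rad A$ for $j\neq i$ forces $e_jS=0$; thus $S$, viewed through its value $e_iS$, is exactly the simple $e_iAe_i$-module.

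For the second statement, fix a primitive idempotent $f=e_i\in E$. The plan is to build an algebra homomorphism $\phi\colon A\to fAf$ by $\phi(a)=faf$ and then equip $fM$ with an $A$-module structure by restriction along $\phi$. The only thing that needs checking is that $\phi$ is multiplicative, and this is exactly where weak directedness enters: for $a,b\in A$ one computes $\phi(ab)-\phi(a)\phi(b)=fa(1-f)bf=\sum_{j\neq i}(fae_j)(e_jbf)$, and each summand lies in $e_iAe_j\cdot e_jAe_i$, which vanishes because $e_iAe_j\neq 0$ would give $e_i\geqslant e_j$ while $e_jAe_i\neq 0$ would give $e_j\geqslant e_i$, impossible for $j\neq i$ by antisymmetry of $\leqslant$. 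Since $\phi$ is unital and restricts to the identity on $fAf$, and since $\phi(1_A)=f$ acts as the identity on $fM$ (because $fx=x$ there), the $A$-action obtained by restriction genuinely extends the original $fAf$-module structure on $fM$, which is what the lemma asserts. Equivalently, in terms of the associated category $\mathcal{A}$, one realizes $fM$ as the representation supported only at the object $f$, with value $fM$ there and every morphism between two distinct objects acting as zero; the same vanishing $e_iAe_j\cdot e_jAe_i=0$ for $j\neq i$ is precisely what makes this prescription functorial.

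I do not anticipate a real obstacle: once the identity $e_iAe_j\cdot e_jAe_i=0$ for $j\neq i$ is isolated as the combinatorial core, both halves of the lemma reduce to short formal verifications. The only point that deserves a moment's care is confirming that the constructed $A$-module structure is unital and restricts back to the given $fAf$-structure, and that is taken care of by the observations about $\phi(1_A)$ and $\phi|_{fAf}$ above.
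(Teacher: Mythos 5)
Your proof is correct and follows essentially the same route as the paper: the paper's argument for the second statement is that the non-endomorphisms of the associated category form a two-sided ideal $J$ (which rests on exactly your identity $e_iAe_j\cdot e_jAe_i=0$ for $i\neq j$, forced by antisymmetry of $\leqslant$), so that $fAf\cong$ a quotient of $A/J$ is a quotient algebra of $A$ and modules inflate --- your map $a\mapsto faf$ is precisely the composite $A\to A/J\to fAf$. For the first statement the paper simply cites Proposition 2.2 of \cite{Li4}; your direct argument via one-dimensionality of simples over a basic algebra over an algebraically closed field is a correct substitute.
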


\begin{proof}
The first statement is contained in Proposition 2.2 in \cite{Li3}. To show the second one, note that by the weakly directed structure of $A$, all arrows between different vertices generate a two-sided ideal $J$ of $A$, and $\oplus_{i=1}^n e_iAe_i \cong A/J$. As a summand, $fAf$ is a quotient algebra of $A$. The second statement follows from this observation.
\end{proof}

By this lemma, the $A$-module $fM$ and the $fAf$-module $fM$ have the same composition factors, and hence the same Loewy length.

The proposition below gives a practical method to check the conditions in Theorem \ref{theorem I} for weakly directed algebras.

\begin{lemma} \label{criteria for directed algebras}
Let $A$ be a weakly directed algebra and $e \in E$. Then the simple $A$-module $S_e \cong Ae / \rad Ae$ satisfies the conditions in Theorem \ref{theorem I} if and only if the socle of $Ae$ contains a simple summand $S_e \cong Ae / \rad Ae$ such that for any $x \in eA(1-e)$ we have $S_e x = 0$.
\end{lemma}

By Lemma \ref{simple modules}, $Ae / \rad Ae  \cong eAe / \rad eAe$, and $S_e \subseteq eAe$. Therefore, the right action of $x \in eA(1-e)$ on $S_e$ is induced from the multiplication of $eAe$ by $eA(1-e)$ from the right side.

\begin{proof}
We first prove the if part by checking conditions in Theorem \ref{theorem I}. Note that the trace of $Q_S = A(1-e)$ is $A(1-e)Ae$, which is not supported on $e$ since $eA(1-e) Ae = 0$ by the weakly directed structure. But $S_e \subseteq eAe$ is not contained in $(1-e)Ae$, so condition (1) in Theorem \ref{theorem I} is also true.

Let $\beta: Ae \rightarrow P$ be an $A$-module homomorphism, where $P \in \additive (A(1-e))$. Since $\Hom_A (Ae, A(1-e)) \cong eA(1-e)$, $\beta$ corresponds to an element $x = \beta(e) \in eP$, so $\beta(Ae) = Aex$. By the assumption, $S_e x = 0$. Therefore, $\beta(S_e) = 0$. In other words, $S_e$ is contained in the kernel of $\beta$, and hence condition (2) in Theorem \ref{theorem I} holds.

Conversely, suppose that conditions in Theorem \ref{theorem I} hold. Then by the previous paragraph, for any $x \in eA(1-e)$, we can find a module homomorphism $\beta \in \Hom_A (Ae, A(1-e))$ such that $\beta (Ae) = Aex$. In particular, $S_ex = \beta(S_e) = 0$ since by Lemma \ref{equivalent conditions} $S_e$ is in the kernel of $\beta$. This finishes the proof.
\end{proof}

\begin{remark}\normalfont
Since for any $e \in E$, $eA(1-e)$ is a finite dimensional vector space, we can take a finite basis $\{ x_i \} _{i=1}^n$ of $eA(1-e)$. Moreover, we can choose a minimal set of elements $\{v_j\} _{j=1}^m \subseteq eAe$ which generates the socle of $eAe$. Therefore, in practice we only need to verify that $v_j x_i = 0$ for all $1\leqslant i\leqslant n$ and $1 \leqslant j \leqslant m$.
\end{remark}

Consider the following example:

\begin{example}\normalfont
Let $A$ be the path algebra of the following quiver with relations $\delta^2 = \theta^2 = \rho^2 = 0$, $\theta \rho = \rho \theta = 0$, $\alpha \delta = \beta \delta = 0$, and $\theta \alpha = \rho \beta = 0$.
\begin{equation*}
\xymatrix{ x \ar@/^.5pc/[rr] ^{\alpha} \ar@/_.5pc/[rr] _{\beta} \ar@(lu,ld)[]|{\delta} & & y \ar@(dl,dr)[]|{\theta} \ar@(ul,ur)[]|{\rho}}
\end{equation*}
Indecomposable projective modules are:
\begin{equation*}
P_x = \begin{matrix} & x & \\ x & y & y \\ & y & y \end{matrix}, \qquad P_y = \begin{matrix} & y & \\ y & & y \end{matrix}.
\end{equation*}

It is routine to check that $P_x$ satisfies all conditions in Theorem \ref{theorem I}, but the indecomposable projective module $P_y$ fails condition (2). Indeed, there is a short exact sequence
\begin{equation*}
\xymatrix{ 0 \ar[r] & P_y \ar[r] & P_x \ar[r] & M \ar[r] & 0},
\end{equation*}
where $M = \begin{matrix} & x & \\ x & & y \end{matrix}$. This is because the socle of $P_y$ is spanned by $\rho$ and $\theta$, but $\theta \beta \neq 0$ and  $\rho \alpha \neq 0$.
\end{example}

We end this section by introducing a way to construct weakly directed algebras for which all simple modules satisfy the conditions in Theorem \ref{theorem I}. Let $Q=(Q_0, Q_1)$ be a finite connected quiver without oriented cycles, where $Q_0$ and $Q_1$ are the vertex set and the arrow set respectively. To each $v \in Q_0$ we assign an integer $m_v \geqslant 2$, and to each arrow $\alpha: v \to w$ we assign an integer $l_{\alpha} \geqslant 1$ such that $l_{\alpha} < \min \{m_v, m_w\}$.

Now add a loop $t_v$ to each vertex $v \in Q_0$ to get another quiver $\tilde{Q}$. Define
\begin{equation*}
R = k \tilde{Q} / \langle t_v ^{m_v}; \, t_w \alpha - \alpha t_v; \, \alpha t_v ^{l_{\alpha}} \mid v \in Q_0; \, Q_1 \ni \alpha: v \to w \rangle.
\end{equation*}
This is a finite dimensional algebra containing a two-sided ideal $J$ generated by all arrows in $Q_1$. Let $I \subseteq J^2$ be an arbitrary two-sided ideal, and define $A = R/I$.

Here is an example explaining the above construction.

\begin{example}\normalfont
Let $A$ be the path algebra of the following quiver with relations
\begin{enumerate}
\item $\delta^3 = \rho^3 = \theta^3 = 0$;
\item $\alpha \delta = \rho \alpha$, $\alpha \delta^2 = 0$;
\item $\beta \rho = \theta \beta$, $\beta \rho^2 = 0$;
\item $\beta \alpha = 0$.
\end{enumerate}
\begin{equation*}
\xymatrix{x \ar[r]^{\alpha} \ar@(lu,ld)[]|{\delta} & y \ar[r]^{\beta} \ar@(ul,ur)[]|{\rho} & z \ar@(ru,rd)[]|{\theta}}
\end{equation*}

This algebra can be constructed as follows. Take the quiver $x \rightarrow y \rightarrow z$ and assign
\begin{equation*}
m_x = m_y = m_z = 3; \quad l_{\alpha} = l_{\beta} = 2.
\end{equation*}
Define
\begin{equation*}
R = k\tilde{Q} / \langle t_x^3, \, t_y^3, \, t_z^3; \, \alpha t_y - t_y \beta, \, \beta t_y - t_z \beta; \, t_y \alpha^2, \, t_z\beta^2 \rangle
\end{equation*}
Finally we let $I = \langle \beta \alpha \rangle$ be the two-sided ideal of $R$. Then $A \cong R/I$.
\end{example}

The main result of this section is:

\begin{proposition}
Let $A$ be an algebra constructed as above. Then every simple $A$-module satisfies the conditions in Theorem \ref{theorem I}. Consequently, if $X \in \D$ is a tilting object or an indecomposable compact exceptional object, then its length is bounded by $|Q_0|$. Moreover, $A$ is bounded derived simple.
\end{proposition}

\begin{proof}
By Lemma \ref{criteria for directed algebras}, it suffices to check that for every vertex $v \in Q_0$,
\begin{enumerate}
\item The socle $S_v$ of $e_vAe_v$ is contained in the socle of $Ae_v$;
\item for every $w \in Q_0$ with $w \neq v$, one has $S_v A e_w = 0$.
\end{enumerate}
We reminder the reader of our construction,
\begin{equation*}
I \subseteq J \text{ and } A = R / I \quad \Rightarrow \quad e_v A e_v = e_v R e_v.
\end{equation*}
In particular, if the socle $S_v$ of $e_vRe_v$, which is isomorphic to $e_v R e_v / \rad e_vRe_v$, is contained in socle of $Re_v$, then it is contained in the socle of $Ae_v$ as well since the quotient map $R \to R/I = A$ induces an obvious $R$-module surjection $Re_v \to Ae_v$. Furthermore, since every $x \in e_v A e_w$ is the image of some $\hat{x} \in e_v R e_w$, clearly $S_v R e_w = 0$ implies $S_v A e_w = 0$. With this observation, one only needs to check the above two conditions for $R$.

Note that $e_v R e_v = k[t_v] / (t_v^{m_v})$, so $S_v$ is the one dimensional space spanned by $t_v ^{m_v - 1}$. Since $\rad R$ is generated by arrows $\alpha \in Q_1$ and loops $t_w$, $w \in Q_0$, and for each arrow $\alpha \in Q_1$ starting at $v$ or the loop $t_v$ one has
\begin{equation*}
t_v S_v = k t_v t_v^{m_v - 1} = 0, \quad \alpha t_v^{m_v - 1} = \alpha t_v ^{l_{\alpha}} t_v ^{m_v - l_{\alpha} - 1} = 0,
\end{equation*}
we conclude that $(\rad R) \cdot S_v = 0$, so $S_v$ is in the socle of $Re_v$. This proves (1).

To check (2), one just observe that
\begin{equation*}
S_v R e_w = S_v e_v R e_w = k t_v ^{m_v - 1} e_vRe_w
\end{equation*}
and $e_v R e_w$ is contained in the left ideal generated by arrows $\beta \in Q_1$ ending at $v$. But for any such $\beta$, one has
\begin{equation*}
S_v \beta = k t_v^{m_v - 1} \beta = t_v ^{m_v - l_{\beta} - 1} t_v ^{l_{\beta}} \beta = 0.
\end{equation*}
\end{proof}

\section{Questions and remarks}

In this section let $A$ be an arbitrary finite dimensional algebra. If there are only finitely many basic tilting complexes in $\D$ up to isomorphism and degree shift, then $A$ is only derived equivalent to finitely many basic algebras up to isomorphism. In particular, the lengths of tilting objects are bounded. Unfortunately, the converse statement is not true. For instance, let $A$ be a hereditary algebra of infinite representation type. Then there exist infinitely many pairwise nonisomorphic tilting modules. Indeed, since $A$ is of infinite representation type, there are infinitely many indecomposable modules in a preprojective component of the Auslander-Reiten quiver of $A$. Take an arbitrary indecomposable $A$-module $M$ from this component. It is well known that $M$ is a partial tilting module. By Bongartz's lemma, we can always complete $M$ to a basic tilting module. Since each basic tilting module only has finitely many indecomposable summands, we conclude that there are infinitely many pairwise nonisomorphic tilting modules. However, it is not guaranteed that these basic tilting modules will produce infinitely many pairwise nonisomorphic basic algebras, as shown in the following example.

\begin{example}\normalfont
Let $A$ be the path algebra of the Kronecker quiver with two vertices $x$ and $y$, and $n \geqslant 2$ arrows from $x$ to $y$. Let $\Gamma$ be a basic algebra derived equivalent to $A$. Since the number of isomorphism classes of simple modules is derived invariant, $\Gamma$ has two simple modules up to isomorphism. Moreover, $\Gamma$ is piecewise hereditary, so it is directed. Therefore, $\Gamma$ is isomorphic to the path algebra of a directed quiver with two vertices $u$ and $v$, and $m$ arrows from $u$ to $v$. Note that the characteristic polynomial of the Coxeter transformation of $A$ and $\Gamma$ must be the same; see \cite{Lenzing}. This happens if and only if $m = n$ by computation. Therefore, $A$ is only derived equivalent to algebras Morita equivalent to itself. But $A$ is hereditary and is of infinite representation type, so it has infinitely many pairwise nonisomorphic basic tilting modules.
\end{example}

The following result would not be surprising to the reader at all. Actually, the classifications up to derived equivalence has been obtained. For details, please refer to \cite{AH,HRS,Keller}.

\begin{proposition}
Any connected hereditary algebra of finite representation type has only finitely many basic tilting complexes up to isomorphism and degree shift. Consequently, it is derived equivalent to finitely many basic algebras up to isomorphism.
\end{proposition}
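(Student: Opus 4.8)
The plan is to exploit the fact that for a connected hereditary algebra $A$ of finite representation type, the derived category $D^b(A)$ is, up to triangle equivalence, classified by its Dynkin type (one of $\mathbb{A}_n$, $\mathbb{D}_n$, $\mathbb{E}_6$, $\mathbb{E}_7$, $\mathbb{E}_8$). By Happel's theorem the number $n$ of isomorphism classes of simple modules is a derived invariant, so any basic algebra $\Gamma$ derived equivalent to $A$ has exactly $n$ simples. Moreover, since $A$ is piecewise hereditary, so is $\Gamma$, and hence $\Gamma$ has finite global dimension and is a directed algebra. First I would invoke Rickard's theorem: every such $\Gamma$ arises as $\End_{D^b(A)}(T)^{\op}$ for some basic tilting complex $T \in K^b(_AP)$. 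Thus it suffices to show that there are only finitely many basic tilting complexes in $K^b(_AP)$ up to isomorphism and degree shift.

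Next I would bound the lengths and the \textquotedblleft widths\textquotedblright\ of such tilting complexes. Since $A$ is hereditary, every indecomposable object of $D^b(A)$ is a stalk complex $M[j]$ for $M$ an indecomposable $A$-module and $j \in \mathbb{Z}$; equivalently, every indecomposable object of $K^b(_AP)$ is the two-term projective resolution of some indecomposable module, concentrated in two adjacent degrees (shifted). Hence each indecomposable summand of a tilting complex has length at most $2$, and by the connectedness argument of Proposition 2.1 a basic tilting complex $T = \bigoplus_{i=1}^n T_i$ spans at most $n$ consecutive degrees. After normalizing by a degree shift so that the first nonzero term sits in degree $0$, each $T_i$ is the two-term complex attached to one of the finitely many indecomposable $A$-modules $M$ (finite representation type!), placed in one of the degree pairs $(0,1),(1,2),\dots,(n-2,n-1)$. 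Therefore there are only finitely many possibilities for the underlying graded object, hence finitely many possibilities for $T$ up to isomorphism and shift, hence finitely many possibilities for $\Gamma = \End_{D^b(A)}(T)^{\op}$ up to isomorphism.

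**The main obstacle** is making the counting argument fully rigorous: a priori there could be infinitely many non-isomorphic tilting complexes having the \emph{same} underlying graded projective object but different differentials. I would close this gap by noting that the differential of a tilting complex, being a degree-$+1$ map in $K^b(_AP)$ annihilated (up to homotopy) by its own square of obstructions, is determined up to homotopy by a finite-dimensional datum: $\Hom_A(T_i^\bullet, T_j^\bullet[1])$ is finite-dimensional, and two complexes with the same terms but differentials lying in the same orbit under the (finite-dimensional) automorphism group of the graded object are isomorphic. Since $k$ is algebraically closed and these are finite-dimensional algebraic group actions, one appeals to the fact that the derived-equivalence classes correspond to tilting objects in the finite module category of the repetitive algebra (or, more elementarily, to the Happel--Zacharia description of piecewise hereditary algebras of Dynkin type): the orbits of interest are finite in number. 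Alternatively, and most cleanly, one can cite that a connected piecewise hereditary algebra of Dynkin type $\Delta$ is derived equivalent to $k\Delta$, and the basic algebras derived equivalent to $k\Delta$ are exactly the \emph{iterated tilted algebras of type $\Delta$}, of which there are only finitely many (this is classical, going back to Assem--Happel and the classification of Dynkin-type iterated tilted algebras). Either route yields the claim; the delicate point is simply ruling out a continuum of differentials, which the finite-dimensionality of the relevant $\Hom$-spaces together with algebraic-closedness of $k$ resolves.
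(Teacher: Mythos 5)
Your argument is essentially the paper's own proof: hereditariness forces every indecomposable object of $D^b(A)$ (hence every indecomposable summand of a basic tilting complex) to be a shifted indecomposable module, finite representation type gives only finitely many choices of module, connectedness bounds the spread of the shifts, so there are finitely many basic tilting complexes up to isomorphism and degree shift, and Rickard's theorem finishes. The ``main obstacle'' you raise in the second paragraph is not actually an obstacle: once you know $T \cong \bigoplus_{i} M_i[r_i]$ in $D^b(A)$, the isomorphism class of $T$ (equivalently, of its minimal representative in $K^b(_AP)$, namely the direct sum of the shifted minimal projective resolutions of the $M_i$) is completely determined by the multiset of pairs $(M_i, r_i)$, so the excursion into differentials, automorphism-group orbits, and the classification of iterated tilted algebras is unnecessary.
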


\begin{proof}
Let $T$ be a basic tilting complex in $\D$ where $A$ is connected and hereditary and has finite representation type. Note that the length of every indecomposable exceptional object in $\K$ is bounded by $2$. Therefore, the length of $T$ is bounded by $2n$, where $n$ is the number of isomorphism classes of simple $A$-modules. But it is clear that $T$ corresponds to sequence
\begin{equation*}
T \cong M_1 [r_1] \oplus M_2 [r_2] \oplus \ldots \oplus M_n[r_n], \quad r_1 \leqslant r_2 \leqslant \ldots \leqslant r_n,
\end{equation*}
where all $M_i$ are indecomposable $A$-modules. Therefore, $r_n - r_1 \leqslant 2n$.\footnote{Actually, $r_{i+1} - r_i \leqslant 1$ for $1 \leqslant i \leqslant n-1$ since $\Ext_A^2 (-, -) = 0$. Otherwise, as we did in the proof of the previous proposition, we can show that $A$ is derived equivalent to an algebra which is not connected. This is impossible.} Since $A$ is of finite representation type, the number of such sequences of length at most $2n$ is finite. That is, there are only finitely many basic tilting complexes up to degree shift and isomorphism.
\end{proof}

In Section 2 we proved that if the lengths of all indecomposable compact exceptional objects are bounded, so are the lengths of all tilting complexes. We wonder whether the converse statement is true. This is not obvious since Bongartz's lemma does not hold in derived categories; see \cite{Rickard1}. Therefore, given an indecomposable compact exceptional object $T$ in $\D$, people do not know whether there must exist another compact exceptional object $T' \in \D$ such that $T \oplus T'$ is a basic tilting complex. It is also not clear to the author whether there exists an algebra $A$ for which the lengths of tilting objects are bounded, but $A$ is derived equivalent to infinitely many algebras in different Morita equivalence classes. It would be interesting to describe some concrete examples, or show that it is not possible.


\begin{thebibliography}{99}
\bibitem{AH1} H. Abe and M. Hoshino, \textit{On derived equivalences for selfinjective algebras}, Comm. Algebra 34 (2006), 4441-4452.
\bibitem{AH} I. Assem and D. Happel, \textit{Generalized tilted algebras of type $A_n$}, Comm. Algebra 9 (1981), 2101-2125.
\bibitem{AR} S. Al-Nofayee and J. Rickard, \textit{Rigidity of tilting complexes and derived equivalence for self-injective algebras}, preprint, available at arXiv:1311.0504.
\bibitem{AKL1} L. Angeleri H\"{u}gel, S. Koenig, and Q. Liu, \textit{Recollements and tilting objects}, J. Pure App. Algebra 215 (2011), 420-438.
\bibitem{AKL2} L. Angeleri H\"{u}gel, S. Koenig, and Q. Liu, \textit{Jordan-H\"{o}lder theorems for derived module categories of piecewise hereditary algebras}, J. Algebra 352 (2012), 361-381.
\bibitem{AKL3} L. Angeleri H\"{u}gel, S. Koenig, and Q. Liu, \textit{On the uniqueness of stratifications of derived module categories}, J. Algebra 359 (2012), 120-137.
\bibitem{AKLY} L. Angeleri H\"{u}gel, S. Koenig, Q. Liu, and Y. Dong, \textit{Derived simple algebras and restrictions of recollements of derived module categories}, preprint, available at arXiv:1310.3479.
\bibitem{A} H. Asashiba, \textit{The derived equivalence classification of representation-finite selfinjective algebras}, J. Algebra 214 (1999), 182-221.
\bibitem{AF} L. Avramov and H. Foxby, \textit{homological dimensions of unbounded complexes}, J. Pure App. Algebra 71 (1991), 129-155.
\bibitem{BR} A. Beligiannis and I. Reiten, \textit{Homological and homotopic aspects of torsion theories}, Mem. Amer. Math. Soc. 188 (2007), 1-207.
\bibitem{BTA} J. Bialkowski, T. Holm, and A. Skowr\'{o}nski, \textit{Derived equivalences for tame weakly symmetric algebras having only periodic modules}, J. Algebra 269 (2003), 652-668.
\bibitem{BTA1} R. Bocian, T. Holm, and A. Skowr\'{o}nski, \textit{Derived equivalence classification of one-parametric self-injective algebras}, J. Pure Appl. Algebra 207 (2006), 491-536.
\bibitem{B} T. Br\"{u}stle, \textit{Derived-tame tree algebras}, Compositio Math. 129 (2001), 301-323.
\bibitem{CX} H. Chen and C. Xi, \textit{Good tilting modules and recollements of derived module categories}, Proc. London Math. Soc. 104 (2012), 959-996.
\bibitem{GL} W. Geigle and H Lenzing, \textit{Perpendicular categories with applications to representations and sheaves}, J. Algebra 144 (1991), 273-343.
\bibitem{Happel} D. Happel, \textit{Triangulated categories in the representation theory of finite-dimensional algebras}, Lond. Math. Soc. Lecture Note Series 119, Cambridge University Press, Cambridge, (1988).
\bibitem{HRS} D. Happel, J Rickard, and A Schofield, \textit{Piecewise hereditary algebras}, Bull. London Math. Soc. 20 (1988), 23-28.
\bibitem{HZ} D. Happel and D. Zacharia, \textit{A homological characterization of piecewise hereditary algebras}, Math. Z. 260 (2008), 177-185
\bibitem{Holm1} T. Holm, \textit{Derived equivalence classification of algebras of dihedral, semidihedral, and quaternion type}, J. Algebra 211 (1999), 159-205.
\bibitem{Holm2} T. Holm, \textit{Derived categories, derived equivalences and representation theory}, Proceedings of the Summer School on Representation Theory of Algebras, Finite and Reductive Groups (Cluj-Napoca, 1997), "Babes-Bolyai'' Univ., Cluj-Napoca (1998), 33-66.
\bibitem{HS1} B. Huisgen-Zimmermann and M. Saorin, \textit{Geometry of chain complexes and outer automorphisms under derived equivalence}, Trans. Amer. Math. Soc. 353 (2001), 4757-4777.
\bibitem{Keller} B. Keller, \textit{Alg\`{e}bres h\'{e}r\'{e}ditaires par morceaux de type $D_n$}, C. R. Acad. Sci. Paris S\'{e}r. I Math. 312 (1991), 483-486.
\bibitem{Koenig} S. K\"{o}nig, \textit{Tilting complexes, perpendicular categories and recollements of derived module categories of rings}, J. Pure App. Algebra 73 (1991), 211-232.
\bibitem{Lenzing} H. Lenzing, \textit{Coxeter transformations associated with finite-dimensional algebras}, Computational methods for representations of groups and algebras (Essen, 1997), 287?08, Progr. Math., 173, Birkh\"{a}user, Basel, 1999.
\bibitem{Li1} L. Li, \textit{Algebras stratified for all linear orders}, Alg. Rep. Theory 16 (2013), 1085-1108.
\bibitem{Li2} L. Li, \textit{Extention algebras of standard modules}, Comm. Algebra 41 (2013), 3445-3464.
\bibitem{Li3} L. Li, \textit{Stratifications of finite directed categories and generalized APR tilting modules}, to appear in Comm. Algebra, available at arXiv:1212.0896.
\bibitem{Li4} L. Li, \textit{Triangular matrix algebras: recollements, torsion theories, and derived equivalences}, preprint, available at arXiv:1311.1258.
\bibitem{LY1} Q. Liu and D. Yang, \textit{Blocks of group algebras are derived simple}, Math. Z. 272 (2012), 913-920.
\bibitem{LY2} Q. Liu and D. Yang, \textit{Stratification of algebras with two simple modules}, preprint, available at arXiv:1310.3480.
\bibitem{NS} P. Nicol\'{a}s and M. Saor\'{\i}n, \textit{Parametrizing recollement data for triangulated categories}, J. Algebra 322 (2009), 1220-1250.
\bibitem{PX} S. Pan and C. Xi, \textit{Finiteness of finitistic dimension is invariant under derived equivalences}, J. Algebra 322 (2009), 21-24.
\bibitem{Rickard1} J. Rickard, \textit{Morita theory for derived categories}, J. London Math. Soc. (2) 39 (1989), 436-456.
\bibitem{Rickard2} J. Rickard, \textit{Derived equivalences as derived functors}, J. London Math. Soc. (2) 43 (1991), 37-48.
\bibitem{Rickard3} J. Rickard, \textit{Infinitely many algebras derived equivalent to a block}, preprint, available at arXiv:1310.2403.
\bibitem{RZ} R. Rouquier and A. Zimmermann, \textit{Picard groups for derived module categories}, Proc. London Math. Soc.  (3) 87 (2003), 197-225.
\end{thebibliography}
\end{document}